\theoremstyle{definition}
\newtheorem{thm}{Theorem}[section]
\theoremstyle{definition}
\theoremstyle{definition}
\newtheorem{prop}[thm]{Proposition}
\theoremstyle{definition}
\newtheorem{dfn}[thm]{Definition}
\theoremstyle{definition}
\newtheorem{lem}[thm]{Lemma}
\theoremstyle{definition}
\theoremstyle{definition}
\begin{document}

\title{A note on the smooth blow-ups of $\mathbb{P}(1,1,1,k)$ in torus-invariant subvarieties}
\author{Daniel Cavey\footnote{Department of Mathematics and Statistics, Lancaster University, Lancaster, LA1 4YF, United Kingdom. Email address: cavey@lancaster.ac.uk}}
\date{}

\maketitle

\begin{abstract}
This papers classifies toric Fano 3-folds with singular locus $\{ \frac{1}{k}(1,1,1) \}$ for $k \in \mathbb{Z}_{\geq 1}$ building on the work of Batyrev \cite{ToroidalFano3-folds} and Watanabe-Watanabe \cite{WatanabeWatanabe}. This is achieved by completing an equivalent problem in the language of Fano polytopes. Furthermore we identify birational relationships between entries of the classification. For a fixed value $k \geq 4$, there are exactly two such toric Fano 3-folds linked by a blow-up in a torus-invariant line.
\end{abstract}

\section{Introduction}\label{Sec1}

A projective algebraic variety over $\mathbb{C}$ is Fano if the anticanonical divisor $-K_{X}$ is ample. Classifications of Fano varieties is an area of substantial interest in algebraic geometry. Most famously del Pezzo classified the smooth Fano varieties in dimension $2$, known as the 10 smooth del Pezzo surfaces. Mori-Mukai completed the classification of smooth Fano 3-folds finding 105 varieties \cite{ClassificationofFano3foldswithB2greaterequal2}.

Restricting the class of varieties in dimension $d$ to toric varieties, that is varieties with a suitable embedding of the algebraic torus $(\mathbb{C}^{*})^{d}$, allows for a combinatorial reinterpretation of the problem. The classification of toric $d$-dimensional Fano varieties is equivalent to the classification of specific $d$-dimensional lattice polytopes, known as Fano polytopes, up to a change of basis on the lattice.

A number of classifications of toric Fano varieties exist in the literature. Most notably Batyrev \cite{ToroidalFano3-folds} and Watanabe-Watanabe \cite{WatanabeWatanabe} simultaneously classified smooth toric Fano 3-folds. Batyrev further classified smooth toric Fano 4-folds \cite{OntheClassificationoftoricFano4folds}. Moving into the non-singular situation Kruezer-Skarke \cite{OntheClassificationofReflexivePolyhedra,ClassificationofReflexivePolyhedrainthreedimensions,CompleteClassificationofReflexivePolyhedrainfourDimensions} classify Gorenstein toric Fano varieties in dimensions $2,3$ and $4$. Kasprzyk classifies toric Fano 3-folds with at worst terminal/canonical singularities in \cite{CanonicalToricFanoThreefolds,ToricFanoThreefoldswithTerminalSingularities}. 

\begin{dfn}
Let $N \cong \mathbb{Z}^{n}$ be a lattice. A Fano polytope $P \subset N_{\mathbb{R}}=N \otimes \mathbb{R}$ is a full-dimensional convex polytope such that $\mathbf{0} \in \text{int}(P)$ and all vertices $v \in \mathcal{V}(P)$ have coprime coordinates.
\end{dfn}

The spanning fan $\Sigma_{P}$ of a Fano polytope $P$ gives rise to a toric variety $X_{P}$, leading to the previously mentioned equivalence for Fano variety classifications. Geometric properties of the toric variety can be seen at the level of the combinatorics of $P$. In particular one can observe the singularities of $X_{P}$; each maximal cone of $\Sigma_{P}$ describes a toric singularity on $X_{P}$.

\begin{dfn}
Consider the action of $\mu_{r}$, the cyclic group of order $r$, on $\mathbb{C}^{3}$ by 
\[ \epsilon \cdot (x,y,z) = (\epsilon^{a}x,\epsilon^{b}y, \epsilon^{c}z), \] 
where $\epsilon$ is an $r^{\text{th}}$ root of unity. The germ of the origin of $\text{Spec}(\mathbb{C}[x,y,z]^{\mu_{r}})$ is known as a $\frac{1}{r}(a,b,c)$ \emph{quotient singularity}. A \emph{cyclic quotient singularity} is a quotient singularity $\frac{1}{r}(a,b,c)$ such that $\gcd(r,a)=\gcd(r,b)= \gcd(r,c)=1$
\end{dfn}

Cyclic quotient singularities are toric, and the corresponding cone is simplicial. Of particular interest for this paper; (i) the simplicial cone whose generating rays form a basis of the lattice $N$, which describes a smooth patch $\mathbb{C}^{n} \subset X_{P}$, and is subsequently known as a smooth cone, and (ii) the cone over $\text{conv} \left\{ (1,0,0), (0,1,0), (-1,-1,-k) \right\}$, considered up to a change of basis, which describes a $\frac{1}{k}(1,1,1)$ singularity and is subsequently denoted $C_{\frac{1}{k}(1,1,1)}$.

As mentioned, Batyrev \cite{ToroidalFano3-folds} and Watanabe-Watanabe \cite{WatanabeWatanabe} classify smooth toric Fano 3-folds, and further identify any birational relations, that is blow-ups or blow-downs, within this classification. The resulting eighteen varieties are outlined in Table \ref{Table1} and fall into a cascade structure, a terminology coined by Reid--Suzuki \cite{CascadesOfProjectionsFromLogDelPezzoSurfaces}, rooted at $\mathbb{P}^{3}$ shown in Figure \ref{Fig1}. Here, and indeed throughout the paper, a blue line indicates a blow-up in a smooth point and a red line indicates a blow-up in a smooth torus-invariant  line.

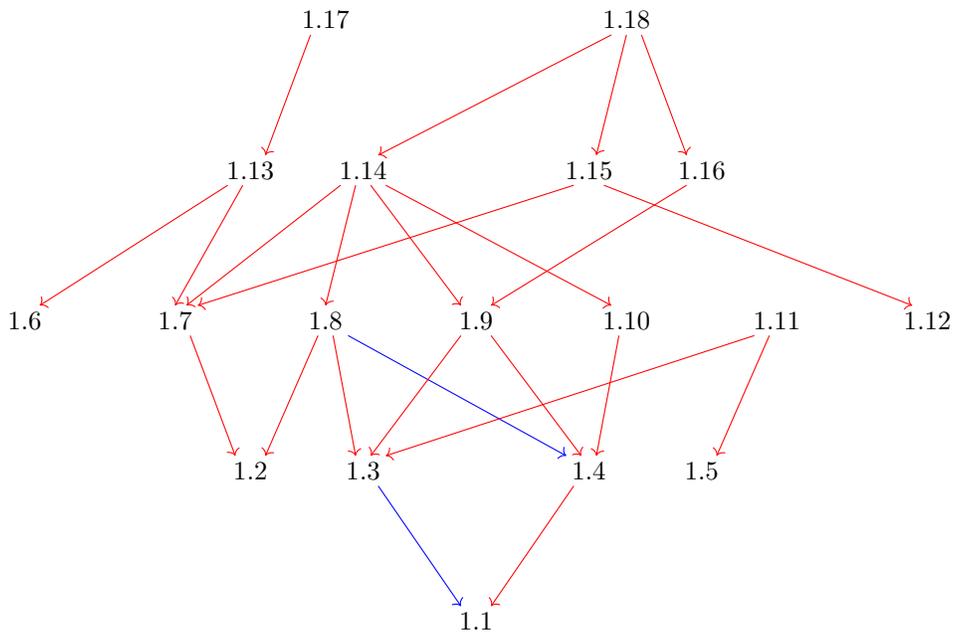
\begin{figure}[H]
\centering
\begin{tikzpicture}[transform shape]
\begin{scope}
\clip (-6.5,-0.3) rectangle (6.5cm,10.3cm); 

\node[] at (0,0) {$1.1$}; 

 \node[] at (-3,2) {$1.2$};
 \node[] at (-1.5,2) {$1.3$};
 \node[] at (1.5,2) {$1.4$};
 \node[] at (3,2) {$1.5$};
 
   \node[] at (-6,4) {$1.6$};
 \node[] at (-4,4) {$1.7$};
  \node[] at (-2,4) {$1.8$};
  \node[] at (0,4) {$1.9$};
 \node[] at (2,4) {$1.10$};
   \node[] at (4,4) {$1.11$};
 \node[] at (6,4) {$1.12$};
 
 \node[] at (-3,6) {$1.13$};
 \node[] at (-1.5,6) {$1.14$};
 \node[] at (1.5,6) {$1.15$};
 \node[] at (3,6) {$1.16$};
 
  \node[] at (-2,8) {$1.17$};
 \node[] at (2,8) {$1.18$};
 
 \path[every node/.style={font=\sffamily}]
 	(0.2,0.2) edge[<-, red] (1.3,1.8)
	(-0.2,0.2) edge[<-, blue] (-1.3,1.8)
	
	(-3.2,2.2) edge[<-, red] (-3.8,3.8)
	(-2.8,2.2) edge[<-, red] (-2.1,3.8)
	(-1.6,2.2) edge[<-, red] (-1.9,3.8)
	(1.2,2.2) edge[<-, blue] (-1.7,3.8)
	(-1.4,2.2) edge[<-, red] (-0.2,3.8)
	(1.4,2.2) edge[<-, red] (0.2,3.8)
	(1.6,2.2) edge[<-, red] (1.9,3.8)
	(-1.2,2.2) edge[<-, red] (3.7,3.8)
	(3.2,2.2) edge[<-, red] (3.9,3.8)
	
	(-5.8,4.2) edge[<-, red] (-3.3,5.8)
	(-4,4.2) edge[<-, red] (-3.1,5.8)
	(-3.85,4.2) edge[<-, red] (-1.8,5.8)
	(-2,4.2) edge[<-, red] (-1.6,5.8)
	(-0.2,4.2) edge[<-, red] (-1.4,5.8)
	(1.8,4.2) edge[<-, red] (-1.2,5.8)
	(-3.7,4.2) edge[<-, red] (1.3,5.8)
	(5.8,4.2) edge[<-, red] (1.7,5.8)
	(0.2,4.2) edge[<-, red] (2.8,5.8)
	
	(-2.8,6.2) edge[<-, red] (-2.2,7.8)
	(-1.3,6.2) edge[<-, red] (1.8,7.8)
	(1.6,6.2) edge[<-, red] (2,7.8)
	(2.8,6.2) edge[<-, red] (2.2,7.8);	
	
\end{scope}
\end{tikzpicture}
\caption{Cascade of varieties rooted at $\mathbb{P}^{3}$}\label{Fig1}
\end{figure}

Motivated by the above, one may ask how this cascade generalises if we were to replace $\mathbb{P}^{3}$ by $\mathbb{P}(1,1,1,k)$ for $k \geq 2$. To answer this question we classify the toric Fano 3-folds $V$ with singular locus $\text{Sing}(V)= \left\{ \frac{1}{k}(1,1,1) \right\}$ using the combinatorial language introduced.

The cases $k=2$ and $k=3$ are terminal and canonical respectively, and so the classifications are readily available via a quick search of the Graded Ring Database which stores work of Kasprzyk \cite{CanonicalToricFanoThreefolds,ToricFanoThreefoldswithTerminalSingularities}. The results are given in the following propositions.

\begin{prop}
Let $V$ be a toric Fano 3-fold, $\text{Sing}(V) = \left\{ \frac{1}{2}(1,1,1) \right\}$. Then $V$ is isomorphic to one of the eighteen varieties in Table \ref{Table2}. Furthermore these varieties admit birational relationships as illustrated in Figure \ref{Fig2}.

\begin{figure}[H]
\centering
\begin{tikzpicture}[transform shape]
\begin{scope}
\clip (-6.5,-0.3) rectangle (6.5cm,10.3cm); 

\node[] at (0,0) {$2.1$}; 

 \node[] at (-1.5,2) {$2.2$};
 \node[] at (1.5,2) {$2.3$};
 
   \node[] at (-5,4) {$2.4$};
 \node[] at (-3,4) {$2.5$};
  \node[] at (-1,4) {$2.6$};
 \node[] at (1,4) {$2.7$};
   \node[] at (3,4) {$2.8$};
 \node[] at (5,4) {$2.9$};
 
 \node[] at (-5,6) {$2.10$};
 \node[] at (-3,6) {$2.11$};
 \node[] at (-1,6) {$2.12$};
 \node[] at (1,6) {$2.13$};
 \node[] at (3,6) {$2.14$};
 \node[] at (5,6) {$2.15$};
 
  \node[] at (-1.5,8) {$2.16$};
 \node[] at (1.5,8) {$2.17$};
 
 \node[] at (0,10) {$2.18$};
 
 \path[every node/.style={font=\sffamily}]
 	(0.2,0.2) edge[<-, blue] (1.3,1.8)
	(-0.2,0.2) edge[<-, red] (-1.3,1.8)
	
	(-1.4,2.2) edge[<-, blue] (-1.2,3.8)
	(-1.3,2.2) edge[<-, red] (0.8,3.8)
	(1.2,2.2) edge[<-, red] (-0.8,3.8)
	(1.4,2.2) edge[<-, red] (1.2,3.8)
	(1.6,2.2) edge[<-, blue] (2.8,3.8)
	(1.8,2.2) edge[<-, red] (4.8,3.8)
	
	(-5,4.2) edge[<-, red] (-5,5.8)
	(-3,4.2) edge[<-, red] (-3,5.8)
	(-3.2,4.2) edge[<-, red] (-4.8,5.8)
	(-2.8,4.2) edge[<-, red] (-1.2,5.8)
	(1.6,2.2) edge[<-, blue] (2.8,3.8)
	(-1,4.2) edge[<-, red] (-1,5.8)
	(-0.8,4.2) edge[<-, red] (0.8,5.8)
	(1.2,4.2) edge[<-, red] (2.8,5.8)
	(3,4.2) edge[<-, red] (3,5.8)
	(3.2,4.2) edge[<-, blue] (4.8,5.8)
	(2.8,4.2) edge[<-, red] (1.2,5.8)
	(4.8,4.2) edge[<-, red] (-0.8,5.8)
	
	(-2.8,6.2) edge[<-, red] (-1.7,7.8)
	(1.2,6.2) edge[<-, red] (1.4,7.8)
	(2.8,6.2) edge[<-, blue] (1.6,7.8)
	(4.8,6.2) edge[<-, red] (1.8,7.8);	
	
\end{scope}
\end{tikzpicture}
\caption{Cascade of varieties rooted at $\mathbb{P}(1,1,1,2)$}\label{Fig2}
\end{figure}
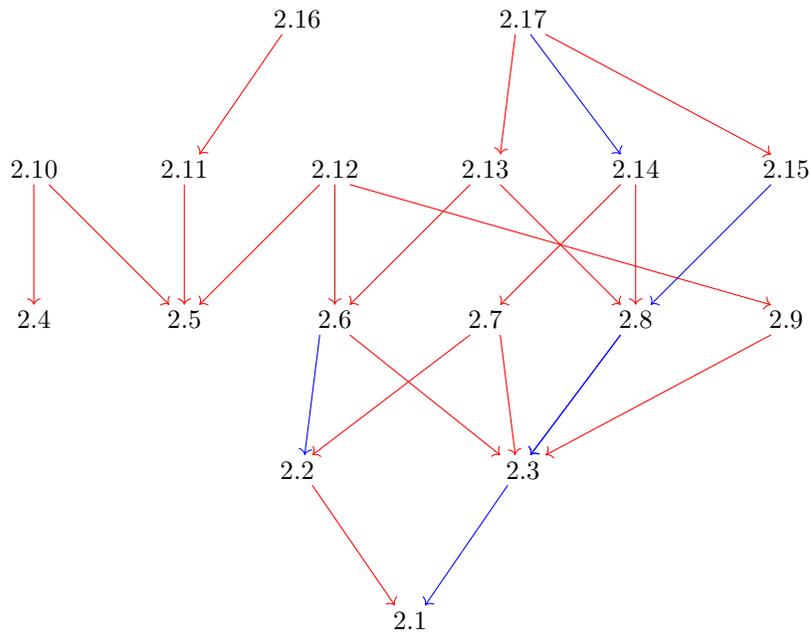
\end{prop}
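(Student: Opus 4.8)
The plan is to deduce the classification from Kasprzyk's list of terminal toric Fano $3$-folds, and then to recover the birational structure combinatorially at the level of Fano polytopes. The key observation is that $\frac{1}{2}(1,1,1)$ is a terminal singularity: it has the form $\frac{1}{r}(1,a,r-a)$ with $r=2$, $a=1$ (equivalently, by the age criterion, $\{1/2\}+\{1/2\}+\{1/2\}=\tfrac{3}{2}>1$), and every such cyclic quotient $3$-fold singularity is terminal. Hence any toric Fano $3$-fold $V$ with $\operatorname{Sing}(V)=\{\tfrac{1}{2}(1,1,1)\}$ has at worst terminal singularities, so $V=X_P$ for a terminal Fano polytope $P$, and $P$ appears in Kasprzyk's classification \cite{ToricFanoThreefoldswithTerminalSingularities}.

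Within that finite list one then retains exactly those $P$ whose spanning fan $\Sigma_P$ has every maximal cone either a smooth cone or $GL_3(\mathbb Z)$-equivalent to $C_{\frac{1}{2}(1,1,1)}$, with precisely one cone of the latter type (note that each such maximal cone is simplicial, so $\Sigma_P$ is automatically simplicial and $\mathcal V(P)=(P\cap N)\setminus\{\mathbf 0\}$). This explicit condition is the ``quick search of the Graded Ring Database'' alluded to above, and it returns the eighteen entries recorded in Table \ref{Table2}. Because Kasprzyk's classification is complete and stated up to a change of lattice basis, these eighteen varieties are pairwise non-isomorphic and exhaust the possibilities; one also checks that the entry $2.1$, with $\mathcal V(P)=\{e_1,e_2,e_3,-e_1-e_2-2e_3\}$, is $\mathbb P(1,1,1,2)$, so that the cascade is genuinely rooted there.

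For the birational relationships one uses the standard toric dictionary for blow-ups in smooth centres: the blow-up of the torus-fixed point of a smooth maximal cone $\langle u_1,u_2,u_3\rangle$ of $\Sigma_P$ is the star subdivision at $u_1+u_2+u_3$, and the blow-up of the torus-invariant line attached to a smooth wall $\langle u_1,u_2\rangle$ is the star subdivision at $u_1+u_2$. In polytope terms the blown-up variety is $X_{P'}$ with $\mathcal V(P')=\mathcal V(P)\cup\{u_1+u_2+u_3\}$, respectively $\mathcal V(P)\cup\{u_1+u_2\}$, provided the enlarged point set is still in convex position about the origin, so that $-K$ remains ample and $P'$ is a Fano polytope. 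For each arrow $b\to a$ of Figure \ref{Fig2} one exhibits a vertex of the polytope of $b$ of one of these two shapes whose deletion yields a $GL_3(\mathbb Z)$-translate of the polytope of $a$, and verifies that the contracted divisor lies over a smooth point (blue) or a smooth torus-invariant line disjoint from the $\frac{1}{2}(1,1,1)$ point (red) — so that $\operatorname{Sing}$ is unchanged and the colour is as claimed. That Figure \ref{Fig2} is complete, i.e. that no further blow-up or blow-down of this kind links two of the eighteen entries, follows from the same search carried out over all ordered pairs.

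The step requiring genuine care is this last one: for each candidate star subdivision one must confirm that ampleness of $-K_{X}$ and the singular locus are both preserved, and that the tabulated set of arrows is exhaustive. Everything else is routine bookkeeping over a finite list.
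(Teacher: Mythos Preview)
Your proposal is correct and follows exactly the route the paper takes: the paper does not give a standalone proof of this proposition but simply remarks that the case $k=2$ is terminal and therefore ``readily available via a quick search of the Graded Ring Database'' storing Kasprzyk's classification, with the birational relations then read off combinatorially. Your write-up is a faithful (and more explicit) unpacking of precisely that argument.
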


\newpage

\begin{prop}
Let $V$ be a toric Fano 3-fold, $\text{Sing}(V) = \left\{ \frac{1}{3}(1,1,1) \right\}$. Then $V$ is isomorphic to one of the two varieties in Table \ref{Table3}. Furthermore these two varieties are related birationally, as illustrated in Figure \ref{Fig3}.
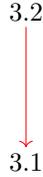
\begin{figure}[H]
\centering
\begin{tikzpicture}[transform shape]
\begin{scope}
\clip (-0.5,-0.3) rectangle (0.5cm,2.3cm); 

\node[] at (0,0) {$3.1$}; 
\node[] at (0,2) {$3.2$};
 
 \path[every node/.style={font=\sffamily}]
 	(0,0.2) edge[<-, red] (0,1.8);	
	
\end{scope}
\end{tikzpicture}
\caption{Cascade of varieties rooted at $\mathbb{P}(1,1,1,3)$}\label{Fig3}
\end{figure}
\end{prop}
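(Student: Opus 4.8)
The plan is to exploit the feature of the case $k=3$ singled out in the preceding remark: the singularity $\frac{1}{3}(1,1,1)$ is \emph{canonical} rather than merely log terminal. Indeed, adding the interior lattice ray $\frac{1}{3}\big((1,0,0)+(0,1,0)+(-1,-1,-3)\big)=(0,0,-1)$ to $C_{\frac{1}{3}(1,1,1)}$ star-subdivides it into three smooth cones, so $C_{\frac{1}{3}(1,1,1)}$ admits a crepant (discrepancy $0$) toric resolution and is canonical. By the dictionary of the introduction, $V=X_{P}$ for a Fano polytope $P\subset N_{\mathbb{R}}\cong\mathbb{R}^{3}$, and canonicity of the singularities is equivalent to $P$ having the origin as its \emph{only} interior lattice point. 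The condition $\mathrm{Sing}(V)=\{\frac{1}{3}(1,1,1)\}$ then translates into: exactly one facet of $P$ spans a copy of $C_{\frac{1}{3}(1,1,1)}$, while every other facet spans a smooth (unimodular) cone. Thus the classification is exactly the problem of listing such $P$, a finite sub-family of the canonical toric Fano $3$-folds classified by Kasprzyk \cite{CanonicalToricFanoThreefolds}.

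First I would normalise the singular facet. After a change of basis on $N$ we may take it to be $F_{0}=\conv\{(1,0,0),(0,1,0),(-1,-1,-3)\}$, which lies in the supporting hyperplane $\{x_{1}+x_{2}-x_{3}=1\}$ and whose generators have determinant $-3$, so its cone is indeed $C_{\frac{1}{3}(1,1,1)}$. The remaining vertices of $P$ then lie in the slab $x_{1}+x_{2}-x_{3}\le 0$, and canonicity (no interior lattice point besides $\mathbf{0}$) confines the admissible lattice points to a finite region. I would then enumerate the vertex sets $\mathcal{V}(P)\supseteq\mathcal{V}(F_{0})$ subject to (i) $\mathbf{0}\in\mathrm{int}(P)$ being the unique interior lattice point, (ii) every facet other than $F_{0}$ being unimodular, and (iii) no second non-smooth cone being created. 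The first solution is $\mathbb{P}(1,1,1,3)$ itself, with $P_{1}=\conv\{e_{1},e_{2},e_{3},-e_{1}-e_{2}-3e_{3}\}$: its only singular facet is $F_{0}=\conv\{e_{1},e_{2},-e_{1}-e_{2}-3e_{3}\}$ and the three facets containing $e_{3}$ are unimodular. This is entry $3.1$.

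The second solution arises by star-subdividing a torus-invariant line lying in the smooth locus of $\mathbb{P}(1,1,1,3)$; equivalently, by adding the primitive generator $e_{3}+(-e_{1}-e_{2}-3e_{3})=(-1,-1,-2)$ on the edge $\conv\{e_{3},-e_{1}-e_{2}-3e_{3}\}$, giving $P_{2}=\conv\{e_{1},e_{2},e_{3},-e_{1}-e_{2}-3e_{3},(-1,-1,-2)\}$. A short determinant check shows all four new facets are unimodular and $F_{0}$ is untouched, so $X_{P_{2}}$ again has singular locus $\{\frac{1}{3}(1,1,1)\}$; this is entry $3.2$, and the morphism $X_{P_{2}}\to\mathbb{P}(1,1,1,3)$ contracting the new ray is the advertised smooth blow-up of a torus-invariant line, giving the red edge of Figure \ref{Fig3}. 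Crucially, the three torus-invariant lines of $\mathbb{P}(1,1,1,3)$ avoiding the singular point all pass through $e_{3}$ and are permuted transitively by the $S_{3}$-symmetry of $P_{1}$ that permutes its three weight-one rays, so up to isomorphism this blow-up is unique.

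The main obstacle is establishing \emph{completeness}: that $P_{1}$ and $P_{2}$ exhaust the list. I would handle this by bounding the number of vertices (equivalently the Picard rank $\rho=|\mathcal{V}(P)|-3$) using canonicity together with the requirement that all facets off $F_{0}$ be unimodular, and then ruling out every remaining configuration — in particular any blow-up through the singular point, which would either split or worsen the $\frac{1}{3}(1,1,1)$ singularity, and any addition producing a second interior lattice point. Rather than grind through this finite but tedious casework by hand, the cleanest route to completeness is to cross-reference Kasprzyk's classification of canonical toric Fano $3$-folds in the Graded Ring Database \cite{CanonicalToricFanoThreefolds}, extracting precisely those entries whose singular locus is a single $\frac{1}{3}(1,1,1)$ point; this returns exactly $P_{1}$ and $P_{2}$, confirming Table \ref{Table3} and Figure \ref{Fig3}.
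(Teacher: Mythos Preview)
Your proposal is correct and follows essentially the same route as the paper: you observe that $\frac{1}{3}(1,1,1)$ is canonical and therefore defer completeness to Kasprzyk's classification \cite{CanonicalToricFanoThreefolds} via the Graded Ring Database, which is exactly how the paper disposes of the cases $k=2,3$. Your additional explicit construction of $P_{1}$, $P_{2}$ and the star-subdivision realising the blow-up in a torus-invariant line, together with the $S_{3}$-symmetry argument for its uniqueness, goes slightly beyond what the paper records but is consistent with it.
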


The original material of this paper is then to deal with the case $k>3$, that is, when the varieties are no longer canonical. The results are summarised in Theorem \ref{1.3}

\begin{thm}\label{1.3}
For $k \geq 4$ let $V$ be a toric Fano 3-fold, $\text{Sing}(V) = \left\{ \frac{1}{k}(1,1,1) \right\}$. Then $V$ is isomorphic to one of the two varieties in Table \ref{Table4}. Furthermore these two varieties are related birationally, as illustrated in Figure \ref{Fig4}.
\begin{figure}[H]
\centering
\begin{tikzpicture}[transform shape]
\begin{scope}
\clip (-0.5,-0.3) rectangle (0.5cm,2.3cm); 

\node[] at (0,0) {k$.1$}; 
\node[] at (0,2) {k$.2$};
 
 \path[every node/.style={font=\sffamily}]
 	(0,0.2) edge[<-, red] (0,1.8);	
	
\end{scope}
\end{tikzpicture}
\caption{Cascade of varieties rooted at $\mathbb{P}(1,1,1,k)$}\label{Fig4}
\end{figure}
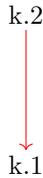
\end{thm}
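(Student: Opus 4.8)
The plan is to work entirely with Fano polytopes and reduce Theorem~\ref{1.3} to showing that there are exactly two Fano polytopes $P\subset N_{\mathbb{R}}\cong\mathbb{R}^{3}$ whose spanning fan $\Sigma_{P}$ has all maximal cones smooth except a single one, which is $\mathrm{GL}_{3}(\Z)$-equivalent to $C_{\frac{1}{k}(1,1,1)}$. Such a $P$ is automatically simplicial (a cyclic quotient singularity cone is simplicial, and smooth cones are simplicial by definition), so every facet of $P$ is a unimodular triangle except the one facet $F$ whose cone is the exceptional one. After a change of basis I would normalise $F=\conv\{e_{1},e_{2},w\}$ with $w=-e_{1}-e_{2}-ke_{3}$; its primitive outer normal is then $u=(k,k,-3)$ when $3\nmid k$ and $u=(k/3,k/3,-1)$ when $3\mid k$, the facet $F$ sits at lattice height $m=\langle u,e_{1}\rangle$, and every vertex $v\notin\{e_{1},e_{2},w\}$ satisfies $\langle u,v\rangle<m$. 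A useful early observation is that $-e_{3}=\tfrac{1}{k}(e_{1}+e_{2}+w)$ lies in $\text{int}(P)$ for every $k\geq 4$, since the ray from $\mathbf{0}$ through $-e_{3}$ meets the barycentre $\tfrac13(e_{1}+e_{2}+w)$ of $F$; so $-e_{3}$ can never be a vertex.

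The next step is a local analysis around $F$. Since $P$ is simplicial, each of the three edges of $F$ lies in exactly one further facet, itself a unimodular triangle $G_{i}=\conv\{(\text{an edge of }F),v_{i}\}$. Imposing $\det(\,\cdot\,)=\pm1$ on each $G_{i}$ and using $k\geq 4$ forces each $v_{i}$ into a very small set: for instance the apex across $\conv\{e_{1},e_{2}\}$ has third coordinate $\pm1$, and whenever two of the $G_{i}$ share an apex $v_{i}=(a,b,c)$ the relations $kb=c\pm1$, $ka=c\pm1$ leave only $v_{i}=e_{3}$ (the sign alternative $-e_{3}$ being interior, hence excluded). Combining this with the requirement that each $G_{i}$ actually support $P$ and leave $\mathbf{0}$ interior cuts the possible ``collars'' of facets around $F$ down to a short finite list.

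From each collar I would then propagate outwards: the facet on the far side of every newly created edge is again a unimodular triangle that must support $P$ with $\mathbf{0}$ interior, and the claim is that this process closes up after at most one additional vertex, so $|\mathcal{V}(P)|\leq 5$. If the three apices coincide, then $P$ is the tetrahedron $\conv\{e_{1},e_{2},w,v_{1}\}$ (a sixth vertex would have to lie beyond one of the facets $F,G_{1},G_{2},G_{3}$, contradicting that these are facets of $P$), and the determinant computation above forces $v_{1}=e_{3}$, giving $P_{1}=\conv\{e_{1},e_{2},e_{3},w\}$ with $X_{P_{1}}\cong\mathbb{P}(1,1,1,k)$, entry $k.1$. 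If the apices do not all coincide, the same determinacy worked through the finitely many sub-cases forces $P$, up to the normalisation, to be $P_{2}=\conv\{e_{1},e_{2},e_{3},w,e_{1}+e_{3}\}$, entry $k.2$, with no room for a further vertex. I expect this boundedness step to be the main obstacle: the large last coordinate of $w$ makes it look a priori plausible that one could keep adjoining vertices ``below'' $F$, and the content of the argument is that every such addition either makes a face non-simplicial, repeats a non-smooth cone, or violates convexity of the Fano polytope.

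Finally I would check directly that $P_{1}$ and $P_{2}$ both satisfy $\text{Sing}(X)=\{\tfrac{1}{k}(1,1,1)\}$ — every facet cone except the one over $F$ is unimodular — which establishes the list in Table~\ref{Table4}, and then read off the birational relation: $\Sigma_{P_{2}}$ is the star subdivision of $\Sigma_{P_{1}}$ along the $2$-dimensional cone $\operatorname{Cone}(e_{1},e_{3})$, which is a face of the two smooth maximal cones $\operatorname{Cone}(e_{1},e_{2},e_{3})$ and $\operatorname{Cone}(e_{1},e_{3},w)$ of $\Sigma_{P_{1}}$. Hence $X_{P_{2}}\to X_{P_{1}}$ is the blow-up of the smooth torus-invariant line $V\!\big(\operatorname{Cone}(e_{1},e_{3})\big)$, the single red edge of Figure~\ref{Fig4}; since these are the only two varieties, this is the only morphism among the entries, completing the proof.
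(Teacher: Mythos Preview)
Your opening normalisation and the idea of studying the three unimodular ``collar'' facets $G_{1},G_{2},G_{3}$ adjacent to $F$ is exactly how the paper begins its proof as well. But the paper does not attempt the direct propagation argument you outline; instead it first proves a separate lemma (Lemma~\ref{2.1}) producing an \emph{explicit finite} set $U^{(k)}$ containing every possible extra vertex, and then enumerates the candidates for your $v_{1},v_{2},v_{3}$ from three explicit eleven-element lists $L_{1}^{(k)},L_{2}^{(k)},L_{3}^{(k)}\subset U^{(k)}$, pruning by convexity at each step. The finiteness of $U^{(k)}$ is obtained via a Hensley/Lagarias--Ziegler volume bound, not by local combinatorics.

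Your plan has two genuine gaps. First, the assertion that ``whenever two of the $G_{i}$ share an apex $v_{i}=(a,b,c)$ the relations $kb=c\pm1$, $ka=c\pm1$ leave only $v_{i}=e_{3}$'' is false: those two relations give $k(a-b)\in\{0,\pm2\}$, hence $a=b$ for $k\geq3$, but they do not force $a=b=0$. For instance $(1,1,k+1)$ and $(-1,-1,-k+1)$ both satisfy them, and the latter is precisely the extra vertex of the paper's polytope $k.2$; both points lie in the paper's $L_{2}^{(k)}\cap L_{3}^{(k)}$. So even the local step around $F$ produces more collar configurations than you allow. Second --- and you flag this yourself --- the boundedness step (``the process closes up after at most one additional vertex'') is the entire content of the theorem and is not argued. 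Without either an a~priori finiteness bound on the vertex set (as in the paper) or a complete case analysis showing that every further unimodular facet forces a contradiction, the propagation argument is open-ended: a~priori one could keep attaching unimodular triangles in the half-space $kx+ky-3z<k$, and ruling this out for $k\geq4$ is exactly what needs proof.

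A minor point: your representative $P_{2}=\conv\{e_{1},e_{2},e_{3},w,e_{1}+e_{3}\}$ differs from the paper's $k.2$ (extra vertex $w+e_{3}=(-1,-1,-k+1)$), but they are $\mathrm{GL}_{3}(\Z)$-equivalent via the automorphism of $\mathbb{P}(1,1,1,k)$ swapping the weight-$1$ rays $e_{1}$ and $w$, so your birational description is correct in different coordinates.
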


In dimension 2, it is shown in \cite{DelPezzoSurfaceswithaSingle1k11singularity} that the $\mathbb{P}^{2}$ cascade generalises in some sense to the cascade of $\mathbb{P}(1,1,k)$ for $k>1$. This does not seem to be the case in the 3-fold setting. The majority of surfaces birationally related to $\mathbb{P}^{3}$, or indeed $\mathbb{P}(1,1,1,2)$ do not generalise, and so one could hypothesise that these are suitably interesting Fano 3-folds in a fashion analogous to the rich geometry of the low degree del Pezzo surfaces. Further it is worth remarking that variety 2.18 is the unique toric Fano 3-fold with singular locus $\left\{ \frac{1}{k}(1,1,1) \right\}$ which is not a smooth blow up of $\mathbb{P}(1,1,1,k)$ for any value of $k$, and is of particular intrigue.

The method outlined in this paper could be replicated if one wished to classify toric Fano 3-folds whose singular locus consists of any one fixed cyclic quotient singularity that is of particular interest to the reader.

\section{Proof of Theorem 1.5}

\begin{lem}\label{2.1}
Let $k>3$. Define the following sets of integer points:
\begin{align*}
U_{1}^{(k)} &:= \big\{ (-2,-1,-a), (-2,-1,-a+2), (-2,-1,-a+4), \ldots, (-2,-1,3) \big\}, \\
U_{2}^{(k)} &:= \big\{ (-1,-2,-a), (-1,-2,-a+2), (-1,-2,-a+4), \ldots, (-2,-1,3) \big\}, \\
U_{3}^{(k)} &:= \big\{ (-1,0,-b), (-1,0,-b+1), (-1,0,-b+2), \ldots, (-1,0,-1), (-1,0,1) \big\}, \\
U_{4}^{(k)} &:= \big\{ (-0,-1,-b), (0,-1,-b+1), (0,-1,-b+2), \ldots, (0,-1,-1), (0,-1,1) \big\}, \\
U_{5}^{(k)} &:= \big\{ (-1,1,-c), (-1,1,-c+2), (-1,1,-c+4), \ldots, (-1,1,k+3) \big\}, \\
U_{6}^{(k)} &:= \big\{ (1,-1,-c), (1,-1,-c+2), (1,-1,-c+4), \ldots, (1,-1,k+3) \big\}, \\
U_{7}^{(k)} &:= \big\{ (1,1,d), (1,1,d+1), (1,1,d+2), \ldots, (1,1,k-1), (1,1,k+1) \big\}, \\
U_{8}^{(k)} &:= \big\{ (1,2,e), (1,2,e+2), (1,2,e+4), \ldots, (1,2,2k+3) \big\}, \\
U_{9}^{(k)} &:= \big\{ (2,1,e), (2,1,e+2), (2,1,e+4), \ldots, (2,1,2k+3) \big\}, \\
U_{10}^{(k)} &:= \left\{ \begin{tabular}{c}
(-5,-4,-13), (-4,-5,-13), (-3,-2,-2k+1), (-2,-3,-2k+1), (-1,-1,-k+1), (-1,2,1), \\ (0,0,1), (0,1,1), (1,0,1), (1,3,k+1), (2,-1,1), (3,1,k+1), (4,-1,3), (5,1,7)
\end{tabular} \right\}, \\
U^{(k)} &:= U_{1}^{(k)} \cup U_{2}^{(k)} \cup \ldots \cup U_{10}^{(k)}.
\end{align*}
where
\[ a := \begin{cases} \frac{4}{3}k-1, & \text{if } k \equiv 0 \quad \text{(mod $3$)} \\ \frac{4}{3}(k-1)+1, & \text{if } k \equiv 1 \quad \text{(mod $3$)} \\ \frac{4}{3}(k-2)+1, & \text{if } k \equiv 2 \quad \text{(mod $3$)} \end{cases}, \]
\[ b := \begin{cases} \frac{2}{3}k-1, & \text{if } k \equiv 0 \quad \text{(mod $3$)} \\ \frac{2}{3}(k-1), & \text{if } k \equiv 1 \quad \text{(mod $3$)} \\ \frac{2}{3}(k-2)+1, & \text{if } k \equiv 2 \quad \text{(mod $3$)} \end{cases}, \]
\[ c := \begin{cases} \frac{1}{3}k+1, & \text{if } k \equiv 0 \quad \text{(mod $6$)} \\ \frac{1}{3}(k-1), & \text{if } k \equiv 1 \quad \text{(mod $6$)} \\ \frac{1}{3}(k-2)+1, & \text{if } k \equiv 2 \quad \text{(mod $6$)} \\ \frac{1}{3}(k-3), & \text{if } k \equiv 3 \quad \text{(mod $6$)} \\ \frac{1}{3}(k-4)-1, & \text{if } k \equiv 4 \quad \text{(mod $6$)} \\ \frac{1}{3}(k-5), & \text{if } k \equiv 5 \quad \text{(mod $6$)} \end{cases}, \]
\[ d := \begin{cases} \frac{1}{3}k+1, & \text{if } k \equiv 0 \quad \text{(mod $3$)} \\ \frac{1}{3}(k-1)+1, & \text{if } k \equiv 1 \quad \text{(mod $3$)} \\ \frac{1}{3}(k-2)+1, & \text{if } k \equiv 2 \quad \text{(mod $3$)} \end{cases}, \]
\[ e := \begin{cases} \frac{2}{3}k+1, & \text{if } k \equiv 0 \quad \text{(mod $3$)} \\ \frac{2}{3}(k-1)+1, & \text{if } k \equiv 1 \quad \text{(mod $3$)} \\ \frac{2}{3}(k-2)+3, & \text{if } k \equiv 2 \quad \text{(mod $3$)} \end{cases}. \]
If $P$ is a Fano polygon such that any cone of $P$ is either smooth or the $C_{\frac{1}{k}(1,1,1)}$ cone over $\text{conv} \{ (1,0,0),(0,1,0),(-1,-1,-k) \}$. Then
\[ \mathcal{V}(P) \backslash \big\{ (1,0,0),(0,1,0),(-1,-1,-k) \big\} \quad \subset \quad U^{(k)}. \]
\end{lem}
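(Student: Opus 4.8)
The plan is to fix the standard cone $C_{\frac{1}{k}(1,1,1)} = \conv\{(1,0,0),(0,1,0),(-1,-1,-k)\}$ (which must appear among the maximal cones, since $\Sing(V)$ is nonempty and equals $\{\tfrac1k(1,1,1)\}$), and to walk outward from its three rays, using at each step the two constraints available to us: (i) every maximal cone other than the standard one is \emph{smooth}, i.e. its primitive ray generators form a $\Z$-basis of $N$; and (ii) $P$ is a \emph{Fano} polytope, so $\mathbf 0 \in \operatorname{int}(P)$ and $P$ is convex, which forces each new vertex to lie on the correct side of the supporting hyperplanes of the facets already constructed. The combination of these two conditions is extremely rigid: once two adjacent rays $u_1,u_2$ spanning a smooth $2$-face are fixed, any vertex $w$ completing them to a smooth $3$-cone satisfies $\det(u_1,u_2,w)=\pm1$, which pins $w$ to an explicit affine line; convexity and the requirement that $P$ contain $\mathbf 0$ then bound the parameter along that line, leaving only finitely many lattice points. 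Carrying this out systematically around the boundary $2$-sphere of $\Sigma_P$ produces exactly the lists $U_1^{(k)},\ldots,U_{10}^{(k)}$.

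Concretely, I would proceed as follows. First, analyze the link of each of the three rays $e_1=(1,0,0)$, $e_2=(0,1,0)$, $e_3=(-1,-1,-k)$ of the singular cone: the cone $C_{\frac1k(1,1,1)}$ is glued along each of its three $2$-faces to a smooth cone, and this determines the ``first'' neighbouring vertices. For instance the face $\conv\{e_1,e_2\}$ is a smooth $2$-cone, and a vertex $w$ with $\det(e_1,e_2,w)=\det\begin{pmatrix}1&0&w_1\\0&1&w_2\\0&0&w_3\end{pmatrix}=w_3=\pm1$ must have last coordinate $\pm1$; since $\mathbf 0\in\operatorname{int}(P)$ and $e_3$ has negative last coordinate, the relevant vertices lie in the hyperplane $\{z=1\}$ (these feed $U_7^{(k)}$ and the $z=1$ entries of $U_{10}^{(k)}$). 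The faces through $e_3$ are handled after a unimodular change of basis sending $e_3$ to a coordinate vector, which converts the ``$k$'' appearing in $e_3$ into the shifts that produce the $k$-dependent endpoints $2k+3$, $k+1$, $-2k+1$, etc. Then I would propagate: each newly found vertex spans new smooth $2$-faces with its neighbours, each of which again constrains its further neighbours to an explicit line, and I track how convexity (no vertex may lie in the cone spanned by others, $P$ must stay on one side of each facet hyperplane) truncates these lines. The arithmetic-progression structure of $U_1,\ldots,U_9$ (step $2$ in some, step $1$ in others) and the case division of $a,b,c,d,e$ modulo $3$ or $6$ are exactly the bookkeeping of ``which lattice points on this line are compatible with smoothness and with the divisibility forced by $k$''.

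Finally, I would assemble the global picture: show that any vertex of $P$ other than $e_1,e_2,e_3$ lies in the union of the finitely many explicit lines found above, and then intersect each such line with the halfspaces cutting out the region where a vertex could possibly sit without either destroying convexity or creating a non-smooth, non-standard cone. This last intersection gives precisely the finite lists; the sporadic set $U_{10}^{(k)}$ collects the ``corner'' vertices that arise where several of these lines would otherwise meet, together with the low-lying points forced to be near $\mathbf 0$.

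The main obstacle I expect is not any single step but the \emph{bookkeeping}: there are three rays to start from, each link generates several directions, and the propagation branches. Keeping the case analysis (especially the residues of $k$ mod $3$ and mod $6$, which govern when the relevant lattice line actually contains a primitive point at the predicted location) organized so that no family of vertices is missed — and showing the process terminates, i.e. that convexity eventually forecloses every branch — is the delicate part. A cleaner way to control termination is to note that $-K_V$ has bounded volume (equivalently $P$ has bounded normalized volume) because the singular cone is fixed and all others are smooth; this caps the number of vertices a priori and guarantees the enumeration closes up.
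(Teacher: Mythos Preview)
Your plan differs from the paper's in its core mechanism. You want to propagate outward from the singular facet using the local smoothness condition: each new facet sharing an edge $\{u_1,u_2\}$ with a known one must have $\det(u_1,u_2,w)=\pm1$, pinning $w$ to an affine line, which convexity then truncates. The paper instead uses a single global criterion that never refers to adjacency: since every non-singular facet is a unimodular simplex, $P$ contains \emph{no lattice points} beyond the origin, its vertices, and those already on the singular facet. Hence a candidate $q$ is ruled out as a vertex whenever some other lattice point $p$ (in the open half-space $kx+ky-3z<k$, outside $C_{\frac1k(1,1,1)}$) would satisfy $p\in\conv\{q,(1,0,0),(0,1,0),(-1,-1,-k)\}$; equivalently, $q$ lies in the cone $C_p$ at $p$ generated by the three rays pointing away from the fixed vertices. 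The algorithm simply sweeps $p$ over a finite candidate set and deletes each $C_p$. Your determinant idea is in fact what the paper deploys \emph{later}, in the proof of Theorem~1.5, to pin down the vertices $v_1,v_2,v_3$ adjacent to the singular facet; for the Lemma it takes the lattice-point route instead, which sidesteps the branching you flag as the main obstacle.

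There is also a gap in your termination argument. You write that $-K_V$ has bounded volume ``because the singular cone is fixed and all others are smooth,'' but this is circular: each smooth cone contributes normalized volume $1$, so the volume of $P$ equals the number of smooth facets plus the fixed contribution from the singular one, and it is precisely the number of facets you are trying to bound. The paper obtains finiteness from the Hensley/Lagarias--Ziegler bound on the volume of a lattice polytope with an interior lattice point (here the origin), and then notes that for any candidate vertex $(x,y,z)$ the three sub-tetrahedra $\conv\{0,(1,0,0),(0,1,0),(x,y,z)\}$, $\conv\{0,(1,0,0),(-1,-1,-k),(x,y,z)\}$, $\conv\{0,(0,1,0),(-1,-1,-k),(x,y,z)\}$ sit inside $P$, so their volumes $|z|$, $|ky-z|$, $|kx-z|$ are all bounded, forcing $(x,y,z)$ into a finite box before the elimination algorithm even begins.
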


\begin{proof}
We aim to find all lattice points that could be added to the vertex set of $P$ that would not violate the assumptions on $P$. Convexity dictates that the plane through $(1,0,0), (0,1,0)$ and $(-1,-1,-k)$ defines a closed half-plane $kx+ky-3z \geq k$ of $N_{\mathbb{R}}$ in which all lattice points do not belong to $\mathcal{V}(P)$.

To sort through the remaining lattice points we define an algorithm. The algorithm is based on the fact that $P$ cannot contain  interior points or non-vertex boundary points other than those coming from $C_{\frac{1}{k}(1,1,1)}$. A lattice point $p \notin C_{\frac{1}{k}(1,1,1)}$ will be a non-vertex lattice point in $P$ if and only if $\mathcal{V}(P)$ contains a lattice point belonging to the polyhedral cone $C_{P}$ based at $p$ and generated by rays laying on the three lines $L_{1}, L_{2}, L_{3}$, where $L_{1}, L_{2}, L_{3}$ are the lines through $p$ and $(1,0,0), (0,1,0), (-1,-1,-k)$ respectively. So to determine all possible vertices we perform the following steps:
\begin{enumerate}[label=(\roman*)]
\item Define a set $A$ of the lattice points in the open half plane $kx+ky-3z<k$;
\item Pick a lattice point $p \in A$, and construct the cone $C_{p}$ described above;
\item Remove from $A$ all lattice points in $C_{p}$;
\item Return to step 2. and pick a point $p \in A$ that has not been chosen before;
\item Continue repeating until all points of $A$ have been chosen as $p$. 
\end{enumerate}

It remains to check that the algorithm does indeed terminate. It is enough to show that we can assume $A$ starts off as a finite set. To do this we use a bound, given by Hensley \cite{LatticeVertexPolytopeswithInteriorLatticePoints} and later improved by Lagarias-Ziegler \cite{BoundsforLatticePolytopescontainingaFixedNumberofInteriorPointsinaSublattice}, on the volume of a dimension $d$ polytope with $n>0$ interior points:
\[ \text{Vol}(P) \leq d! \cdot (8d)! \cdot 15^{d \cdot 2^{2d+1} \cdot n}. \]
Since $P$ is a $3$-dimensional polytope with at least one interior point, namely the origin, it follows that the volume is bounded by some $R \in \mathbb{Z}_{>0}$.

Consider $(x,y,z) \in A$. Define three lattice polytopes:
\begin{align*}
T_{1} :=& \text{conv} \left\{ (0,0,0), (1,0,0), (0,1,0), (x,y,z) \right\}, \\
T_{2} :=& \text{conv} \left\{ (0,0,0), (1,0,0), (-1,-1,-k), (x,y,z) \right\}, \\
T_{3} :=& \text{conv} \left\{ (0,0,0), (0,1,0), (-1,-1,-k), (x,y,z) \right\}. 
\end{align*}
If there exists $P$ with $(x,y,z) \in \mathcal{V}(P)$, then $T_{i} \subset P$, $\forall i$. It follows that the volume of $T_{i}$ is bounded by $R$. Calculate that
\begin{align*} 
\text{Vol}(T_{1}) &= \begin{vmatrix} 1 & 0 & 0 \\ 0 & 1 & 0 \\ x & y & z \end{vmatrix} = \lvert z \rvert \\
\text{Vol}(T_{2}) &= \begin{vmatrix} 1 & 0 & 0 \\ -1 & -1 & -k \\ x & y & z \end{vmatrix} = \lvert ky-z \rvert \\
\text{Vol}(T_{3}) &= \begin{vmatrix} 0 & 1 & 0 \\ -1 & -1 & -k \\ x & y & z \end{vmatrix} = \lvert kx-z \rvert
\end{align*}
It follows that $x,y,z$ are all bounded and so $A$ is finite. It is worth noting that the authors of \cite{LatticeVertexPolytopeswithInteriorLatticePoints,BoundsforLatticePolytopescontainingaFixedNumberofInteriorPointsinaSublattice} do not claim $R$ to be a sharp bound on the volume. Even if it was sharp, we do not believe $R$ would subsequently provide a sharp bound on the values that $x,y,z$ can take. However it is enough to assume that $A$ can be taken as finite in the algorthm and proves that the algorithm will indeed terminate. The set that is left after running the algorithm is a subset of the set $U^{(K)}$ described in the lemma for all $k \geq 4$.
\end{proof}

We are now ready to prove Theorem \ref{1.3}.

\begin{proof}"Theorem 1.5"
Consider the Fano polygon $P$ of a toric Fano 3-fold $V$, $\text{Sing}(V) = \left\{ \frac{1}{k}(1,1,1) \right\}$. Without loss of generality assume that the face of $P$ corresponding to the unique singularity is given by $\text{conv} \left\{ (1,0,0), (0,1,0), (-1,-1,-k) \right\}$. Since all other faces of $P$ must define a smooth cone, it follows from Lemma \ref{2.1} that $\mathcal{V}(P) \subset U^{(k)}$.

In particular there exists a vertex $v_{1} \in \mathcal{V}(P), v_{1} \neq (-1,-1,-k)$, such that $\text{conv} \{ (1,0,0), (0,1,0), v_{1} \}$ defines a face of $P$. This face will define a smooth cone and it must not determine that $\mathbf{0} \notin P$ by convexity. With this in mind define:
\[ L_{1}^{(k)} := \left\{ (x,y,z) \in U^{(k)} : \begin{vmatrix} 1 & 0 & 0 \\ 0 & 1 & 0 \\ x & y & z \end{vmatrix} = 1 \quad \text{and} \quad \begin{array}{c} \text{sign} ( n \cdot (-1,-1,-k) ) \neq \text{sign} (n \cdot v), \\ \text{where } n \text{ is the inward pointing normal} \\ \text{of conv}\{ (1,0,0), (0,1,0), (x,y,z) \} \end{array} \right\}. \] 
Define $v_{2} \neq (0,1,0)$ to be the vertex creating a face with $(1,0,0)$ and $(-1,-1,-k)$, and $v_{3} \neq (1,0,0)$ to be the vertex creating a face with $(0,1,0)$ and $(-1,-1,-k)$. Therefore consider two further similarly motivated sets of lattice points:
\[ L_{2}^{(k)} := \left\{ (x,y,z) \in U^{(k)} : \begin{vmatrix} 1 & 0 & 0 \\ -1 & -1 & -k \\ x & y & z \end{vmatrix} = 1 \quad \text{and} \quad \begin{array}{c} \text{sign} ( n \cdot (0,1,0) ) \neq \text{sign} (n \cdot v), \\ \text{where } n \text{ is the inward pointing normal} \\ \text{of conv}\{ (1,0,0), (-1,-1,-k), (x,y,z) \} \end{array} \right\}, \]
\[ L_{3}^{(k)} := \left\{ (x,y,z) \in U^{(k)} : \begin{vmatrix} 0 & 1 & 0 \\ -1 & -1 & -k \\ x & y & z \end{vmatrix} = 1 \quad \text{and} \quad \begin{array}{c} \text{sign} ( n \cdot (1,0,0) ) \neq \text{sign} (n \cdot v), \\ \text{where } n \text{ is the inward pointing normal} \\ \text{of conv}\{ (0,1,0), (-1,-1,-k), (x,y,z) \} \end{array} \right\}. \]

Using the list in the statement of Lemma \ref{2.1}, we can calculate $L_{1}, L_{2}, L_{3}$ explicitly:
\[ L_{1}^{(k)} = \left\{ \begin{array}{c}
(-2,-1,1), (-1,-2,1), (-1,0,1), (-1,1,1), (-1,2,1), (0,-1,1) \\
(0,0,1), (0,1,1), (1,-1,1), (1,0,1), (2,-1,1)
\end{array} \right\}, \]
\[ L_{2}^{(k)} = \left\{ \begin{array}{c}
(-3,-2,-2k+1), (-2,-1,-k+1), (-1,-1,-k+1), (-1,0,1), (-1,1,k+1), \\
(0,0,1), (1,0,1), (1,1,k+1), (1,2,2k+1), (2,1,k+1), (3,1,k+1)
\end{array} \right\}, \]
\[ L_{3}^{(k)} = \left\{ \begin{array}{c}
(-2,-3,-2k+1), (-1,-2,-k+1), (-1,-1,-k+1), (0,-1,1), (0,0,1), (0,1,1) \\
(1,-1,k+1), (1,1,k+1), (1,2,k+1), (1,3,k+1), (2,1,2k+1)
\end{array} \right\}. \]
It was worth noting that while choices for each of the $v_{i}$ must be made, it is not necessarily true that $v_{i} \neq v_{j}$ for $i \neq j$.

From here we construct sets $\mathcal{V}(P)$ defining the vertices of a suitable Fano polytope: 
\begin{enumerate}[label=(\roman*)]
\item Iterate through the possibilities for $v_{1}$;
\item Each choice for $v_{1}$ narrows down the possibilities for $v_{2}$ and $v_{3}$ from $L_{2}^{(k)}$ and $L_{3}^{(k)}$ respectively, by convexity;
\item Iterating through choices for $v_{2}$ narrows down the possibilities for $v_{3}$ from $L_{3}^{(k)}$;
\item Lattice points in $U^{(k)}$ which satisfy the three new convexity conditions can be added to $\mathcal{V}(P)$.
\end{enumerate} 
In attempting to do this a number of things can go wrong. For example, if there is a non-vertex lattice point in $\text{conv} \left\{ (1,0,0), (0,1,0), (-1,-1,-k), v_{1}, v_{2} \right\}$, other than those in $C_{\frac{1}{k}(1,1,1)}$, then $P$ would contain this lattice point and would therefore contain a second singular cone. This could also happen after adding $v_{3}$ or a vertex from $U^{(k)}$. Alternatively convexity requirements from adding $v_{1}$ and $v_{2}$ could leave us with no options for $v_{3}$, meaning a suitable $P$ cannot exist. Similarly it could happen that there a no possibilities in $U^{(k)}$ due to convexity from adding $v_{1}, v_{2}, v_{3}$ and the convex hull of the current set of vertices has a second singular cone meaning we cannot complete to the construction of a suitable polytope $P$. We demonstrate a sample computation for a particular choice of $v_{1}$.

Choose $v_{1}=(-2,-1,1) \in L_{1}^{(k)}$. It is worth noting that $(-2,-1,1) \notin L_{2}^{(k)}, L_{3}^{(k)}$ and so is not a suitable choice for $v_{2}$ or $v_{3}$. The new face $\text{conv} \left\{ (1,0,0), (0,1,0), (-2,-1,-1) \right\}$ bounds $P$ by the plane $x+y+4z < 1$. There are only three points in $L_{2}^{(k)}$ satisfying this bound and so are suitable choices of $v_{2}$, namely $(-3,-2,-2k+1), (-2,-1,-k+1)$ and $(-1,-1,-k+1)$. For $(-3,-2,-2k+1)$, note that $\text{conv} \left\{ (0,1,0), (-2,-1,1), (-3,-2,-2k+1) \right\}$ contains an interior point and so this is not a suitable choice for $v_{2}$. Similarly $\text{conv} \left\{ (-2,-1,-k+1), (-2,-1,1) \right\}$ contains interior points ruling out $v_{2} = (-2,-1,1)$. Therefore $v_{2}=(-1,-1,-k+1)$. Note that $v_{2} = (-1,-1,k+1) \in L_{3}^{(k)}$. Suppose initially that $v_{3} \neq (-1,-1,-k+1)$. Adding $v_{2}$ gave $P$ an additional bounding plane, $x-2y<1$, along with the pre-existing bound $x+y+4z<1$. No points in $L_{3}^{(k)}$ satisfy both these equations and so there would no possible choice for $v_{3}$. The only remaining choice for $v_{3}$ is $(-1,-1,-k+1)$. The polytope $\text{conv} \left\{ (1,0,0), (0,1,0), (-1,-1,-k), (-2,-1,1), (-1,-1,-k+1) \right\}$ has a singular cone over the face $\text{conv} \left\{ (1,0,0), (0,1,0), (-1,-1,-k+1) \right\}$, and it is necessary to add vertices from $U^{(k)}$ to change this. However we now have three bounding planes coming from adding $v_{1},v_{2}$ and $v_{3}$, namely $x+y+4z<1, x-2y<1$ and $y-2x<1$ respectively, and one can check that no points of $U^{(k)}$ satisfy all three of these bounds. Therefore there are no possible polytope constructions here.

The only suitable Fano polygons that are constructed through this method are the two that are listed in the statement of the theorem. It is routine to observe the blow up relation between the two varieties.
\end{proof}

\newpage

\begin{table}[H]\label{Table1}
\caption{Smooth toric Fano 3-folds $V$}\label{Table1}
\renewcommand{\arraystretch}{2}
\begin{center}
\begin{tabular}{| c | | c | c | c | c |} \hline
Id & $\mathcal{V}(P_{V})$ & $(-K_{X})^{3}$ & $\rho (X)$ & Model (where applicable) \\ \hline \hline 
1.1 & $\begin{matrix} 1 & 0 & 0 & -1 \\ 0 & 1 & 0 & -1 \\ 0 & 0 & 1 & -1 \end{matrix}$   & 64 & 1 & $\mathbb{P}^{3}$ \\ \hline
1.2 & $\begin{matrix} 1 & 0 & 0 & -1 & 0 \\ 0 & 1 & 0 & -1 & 0 \\ 0 & 0 & 1 & 0 & -1 \end{matrix}$ & 54 & 2 & $\mathbb{P}^{2} \times \mathbb{P}^{1}$ \\ \hline
1.3 & $\begin{matrix} 1 & 0 & 0 & -1 & -1 \\ 0 & 1 & 0 & -1 & 0 \\ 0 & 0 & 1 & -1 & 0 \end{matrix}$ & 56 & 2 & $\mathbb{P} \left( \mathcal{O}_{\mathbb{P}^{2}} \oplus \mathcal{O}_{\mathbb{P}^{2}}(1) \right)$ \\ \hline
1.4 & $\begin{matrix} 1 & 0 & 0 & -1 & -1 \\ 0 & 1 & 0 & -1 & -1 \\ 0 & 0 & 1 & -1 & 0 \end{matrix}$ & 54 & 2 & $\mathbb{P} \left( \mathcal{O}_{\mathbb{P}^{1}} \oplus \mathcal{O}_{\mathbb{P}^{1}} \oplus \mathcal{O}_{\mathbb{P}^{1}} \right)$ \\ \hline
1.5 & $\begin{matrix} 1 & 0 & -1 & -1 & -1 \\ 0 & 0 & -1 & 0 & 1 \\ 0 & 1 & -1 & 0 & 0 \end{matrix}$  & 62 & 2 & $\mathbb{P} \left( \mathcal{O}_{\mathbb{P}^{2}} \oplus \mathcal{O}_{\mathbb{P}^{2}}(2) \right)$ \\ \hline
1.6 & $\begin{matrix} 1 & 0 & 0 & -1 & 0 & 0 \\ 0 & 1 & 0 & 0 & -1 & 0 \\ 0 & 0 & 1 & 0 & 0 & -1 \end{matrix}$ & 48 & 3 & $\mathbb{P}^{1} \times \mathbb{P}^{1} \times \mathbb{P}^{1}$ \\ \hline
1.7 & $\begin{matrix} 1 & 0 & 0 & -1 & -1 & 0 \\ 0 & 1 & 0 & -1 & 0 & 0 \\ 0 & 0 & 1 & 0 & 0 & -1 \end{matrix}$  & 48 & 3 & $DS_{8} \times \mathbb{P}^{1}$ \\ \hline
\end{tabular}
\end{center}`
\end{table}

\renewcommand{\arraystretch}{2}
\begin{center}
\begin{tabular}{| c | | c | c | c | c |} \hline
Id & $\mathcal{V}(P_{V})$ & $(-K_{X})^{3}$ & $\rho (X)$ & Model (where applicable) \\ \hline \hline 
1.8 & $\begin{matrix} 1 & 0 & 0 & -1 & -1 & 0 \\ 0 & 1 & 0 & -1 & -1 & 0 \\ 0 & 0 & 1 & -1 & 0 & -1 \end{matrix}$  & 46 & 3 &  \\ \hline
1.9 & $\begin{matrix} 1 & 0 & 0 & -1 & -1 & -1 \\ 0 & 1 & 0 & -1 & -1 & 0 \\ 0 & 0 & 1 & -1 & 0 & 0 \end{matrix}$ & 50 & 3 & $\mathbb{P} \left( \mathcal{O}_{DS_{8}} \oplus \mathcal{O}_{DS_{8}}(1) \right)$ \\ \hline
1.10 & $\begin{matrix} 1 & 0 & 0 & -1 & -1 & 1 \\ 0 & 1 & 0 & -1 & -1 & 1 \\ 0 & 0 & 1 & -1 & 0 & 0 \end{matrix}$ & 44 & 3 & $\mathbb{P} \left( \mathcal{O}_{\mathbb{P}^{1} \times \mathbb{P}^{1}} \oplus \mathcal{O}_{\mathbb{P}^{1} \times \mathbb{P}^{1}}(1,-1) \right)$ \\ \hline
1.11 & $\begin{matrix} 1 & 0 & 0 & -1 & -1 & -1 \\ 0 & 1 & 0 & -1 & 0 & 1 \\ 0 & 0 & 1 & -1 & 0 & 0 \end{matrix}$ & 50 & 3 &  \\ \hline
1.12 & $\begin{matrix} 1 & 0 & -1 & -1 & 0 & -1 \\ 0 & 1 & -1 & 0 & 0 & 0 \\ 0 & 0 & 0 & 0 & -1 & 1 \end{matrix}$   & 52 & 3 & $\mathbb{P} \left( \mathcal{O}_{\mathbb{P}^{1} \times \mathbb{P}^{1}} \oplus \mathcal{O}_{\mathbb{P}^{1} \times \mathbb{P}^{1}}(1,-1) \right)$ \\ \hline
1.13 & $\begin{matrix} 1 & 0 & 0 & -1 & -1 & 0 & 0 \\ 0 & 1 & 0 & -1 & 0 & -1 & 0 \\ 0 & 0 & 1 & 0 & 0 & 0 & -1 \end{matrix}$ & 42 & 4 & $DS_{7} \times \mathbb{P}^{1}$ \\ \hline
1.14 & $\begin{matrix} 1 & 0 & 0 & -1 & -1 & 0 & 1 \\ 0 & 1 & 0 & -1 & -1 & 0 & 1 \\ 0 & 0 & 1 & -1 & 0 & -1 & 0 \end{matrix}$ & 40 & 4 &  \\ \hline
\end{tabular}
\end{center}

\renewcommand{\arraystretch}{2}
\begin{center}
\begin{tabular}{| c | | c | c | c | c |} \hline
Id & $\mathcal{V}(P_{V})$ & $(-K_{X})^{3}$ & $\rho (X)$ & Model (where applicable) \\ \hline \hline 
1.15 & $\begin{matrix} 1 & 0 & 0 & -1 & -1 & 0 & -1 \\ 0 & 1 & 0 & -1 & 0 & 0 & 0 \\ 0 & 0 & 1 & 0 & 0 & -1 & 1 \end{matrix}$  & 44 & 4 &  \\ \hline
1.16 & $\begin{matrix} 1 & 0 & 0 & -1 & -1 & -1 & 0 \\ 0 & 1 & 0 & -1 & -1 & 0 & -1 \\ 0 & 0 & 1 & -1 & 0 & 0 & 0 \end{matrix}$ & 46 & 4 & \\ \hline
1.17 & $\begin{matrix} 1 & 0 & 0 & 1 & -1 & 0 & 0 & -1 \\ 0 & 1 & 0 & 1 & 0 & -1 & 0 & -1 \\ 0 & 0 & 1 & 0 & 0 & 0 & -1 & 0 \end{matrix}$ & 36 & 5 & $DS_{6} \times \mathbb{P}^{1}$ \\ \hline
1.18 & $\begin{matrix} 1 & 0 & 0 & -1 & -1 & 0 & 1 & 1 \\ 0 & 1 & 0 & -1 & -1 & 0 & 1 & 1 \\ 0 & 0 & 1 & -1 & 0 & -1 & 0 & 1 \end{matrix}$ & 36 & 5 &  \\ \hline
\end{tabular}
\end{center}

\newpage

\begin{table}[H]\label{Table2}
\caption{Toric Fano 3-folds $V$, $\text{Sing}(V)= \left\{ \frac{1}{2}(1,1,1) \right\}$}\label{Table2}
\renewcommand{\arraystretch}{2}
\begin{center}
\begin{tabular}{| c | | c | c | c | c |} \hline
Id & $\mathcal{V}(P_{V})$ & $(-K_{X})^{3}$ & $\rho (X)$ & Id on GRDB \\ \hline \hline 
2.1 & $\begin{matrix} 1 & 0 & 0 & -1 \\ 0 & 1 & 0 & -1 \\ 0 & 0 & 1 & -2 \end{matrix}$ & $\frac{125}{2}$ & $1$ & 7 \\ \hline
2.2 & $\begin{matrix} 1 & 0 & 0 & -1 & -1 \\ 0 & 1 & 0 & -1 & -1 \\ 0 & 0 & 1 & -1 & -2 \end{matrix}$ & $\frac{101}{2}$ & $2$ & 44 \\ \hline
2.3 & $\begin{matrix} 1 & 0 & 0 & -1 & -1 \\ 0 & 1 & 0 & -1 & 0 \\ 0 & 0 & 1 & -2 & -1 \end{matrix}$ & $\frac{109}{2}$ & $2$ & 46 \\ \hline
2.4 & $\begin{matrix} 1 & 0 & -1 & -1 & -1 & 2 \\ 0 & 1 & -1 & 0 & 0 & 0 \\ 0 & 0 & -2 & -1 & 0 & 1 \end{matrix}$ & $\frac{113}{2}$ & $3$ & 122 \\ \hline
2.5 & $\begin{matrix} 1 & 0 & -1 & -1 & -1 & 1 \\ 0 & 1 & -1 & 0 & 0 & 0 \\ 0 & 0 & -2 & -1 & 0 & 1 \end{matrix}$  & $\frac{97}{2}$ & $3$ & 134 \\ \hline 
2.6 & $\begin{matrix} 1 & 0 & 0 & -1 & -1 & 1 \\ 0 & 1 & 0 & -1 & 0 & 0 \\ 0 & 0 & 1 & -2 & -1 & 1 \end{matrix}$ & $\frac{85}{2}$ & $3$ & 132 \\ \hline
2.7 & $\begin{matrix} 1 & 0 & 0 & -1 & -1 & -1 \\ 0 & 1 & 0 & -1 & -1 & 0 \\ 0 & 0 & 1 & -2 & -1 & -1 \end{matrix}$ & $\frac{93}{2}$ & $3$ & 137 \\ \hline
\end{tabular}
\end{center}
\end{table}

\newpage

\begin{table}[H]
\renewcommand{\arraystretch}{2}
\begin{center}
\begin{tabular}{| c | | c | c | c | c |} \hline
Id & $\mathcal{V}(P)$ & $(-K_{X})^{3}$ & $\rho (X)$ & Id on GRDB \\ \hline \hline 
2.8 & $\begin{matrix} 1 & 0 & 0 & -1 & -1 & 0 \\ 0 & 1 & 0 & -1 & 0 & -1 \\ 0 & 0 & 1 & -2 & -1 & -1 \end{matrix}$ & $\frac{93}{2}$ & $3$ & 121 \\ \hline
2.9 & $\begin{matrix} 1 & 0 & 0 & -1 & -1 & -1 \\ 0 & 1 & 0 & -1 & 0 & 0 \\ 0 & 0 & 1 & -2 & -1 & 0 \end{matrix}$ & $\frac{97}{2}$ & $3$ & 128 \\ \hline
2.10 & $\begin{matrix} 1 & 0 & -1 & -1 & -1 & 1 & 2 \\ 0 & 1 & -1 & 0 & 0 & 0 & 0 \\ 0 & 0 & -2 & 0 & -1 & 1 & 1 \end{matrix}$ & $\frac{81}{2}$ & $4$ & 253 \\ \hline
2.11 & $\begin{matrix} 1 & 0 & -1 & -1 & -1 & 0 & 1 \\ 0 & 1 & -1 & 0 & 0 & -1 & 0 \\ 0 & 0 & -2 & 0 & -1 & -1 & 1 \end{matrix}$ & $\frac{85}{2}$ & $4$ & 283 \\ \hline
2.12 & $\begin{matrix} 1 & 0 & 0 & -1 & -1 & -1 & 1 \\ 0 & 1 & 0 & -1 & 0 & 0 & 0 \\ 0 & 0 & 1 & -2 & 0 & -1 & 1 \end{matrix}$ & $\frac{73}{2}$ & $4$ & 262 \\ \hline
2.13 & $\begin{matrix} 1 & 0 & 0 & -1 & -1 & 0 & 1 \\ 0 & 1 & 0 & -1 & 0 & -1 & 0  \\ 0 & 0 & 1 & -2 & -1 & -1 & 1 \end{matrix}$  & $\frac{77}{2}$ & $4$ & 280 \\ \hline
2.14 & $\begin{matrix} 1 & 0 & 0 & -1 & -1 & 0 & -1 \\ 0 & 1 & 0 & -1 & 0 & -1 & -1 \\ 0 & 0 & 1 & -2 & -1 & -1 & -1 \end{matrix}$ & $\frac{85}{2}$ & $4$ & 209 \\ \hline
\end{tabular}
\end{center}
\end{table}

\newpage

\begin{table}[H]
\renewcommand{\arraystretch}{2}
\begin{center}
\begin{tabular}{| c | | c | c | c | c |} \hline
Id & $\mathcal{V}(P)$ & $(-K_{X})^{3}$ & $\rho (X)$ & Id on GRDB \\ \hline \hline 
2.15 & $\begin{matrix} 1 & 0 & 0 & -1 & -1 & 0 & 1 \\ 0 & 1 & 0 & -1 & 0 & -1 & 1 \\ 0 & 0 & 1 & -2 & -1 & -1 & 1 \end{matrix}$ & $\frac{77}{2}$ & $4$ & 170 \\ \hline
2.16 & $\begin{matrix} 1 & 0 & -1 & -1 & -1 & 0 & 1 & 1 \\ 0 & 1 & -1 & 0 & 0 & -1 & 0 & 1 \\ 0 & 0 & -2 & 0 & -1 & -1 & 1 & 1 \end{matrix}$ & $\frac{73}{2}$ & $5$ & 394 \\ \hline
2.17 &  $\begin{matrix} 1 & 0 & 0 & -1 & -1 & 0 & -1 & 1 \\ 0 & 1 & 0 & -1 & 0 & -1 & -1 & 1 \\ 0 & 0 & 1 & -2 & -1 & -1 & -1 & 1 \end{matrix}$  & $\frac{69}{2}$ & $5$ & 352 \\ \hline
2.18 & $\begin{matrix} 1 & 0 & -1 & 0 & 1 & -1 & 1 & -1 & 0 \\ 0 & 1 & -1 & -1 & -1 & 1 & 2 & -2 & -3 \\ 0 & 0 & -2 & 0 & 0 & 0 & 2 & -2 & -2 \end{matrix}$ & $\frac{69}{2}$ & $6$ & 514 \\ \hline
\end{tabular}
\end{center}
\end{table}

\newpage

\begin{table}[H]
\caption{Toric Fano 3-folds $V$, $\text{Sing}(V)= \left\{ \frac{1}{3}(1,1,1) \right\}$}\label{Table3}
\renewcommand{\arraystretch}{2}
\begin{center}
\begin{tabular}{| c | | c | c | c | c |} \hline
Id & $\mathcal{V}(P_{V})$ & $(-K_{X})^{3}$ & $\rho (X)$ & Id on GRDB \\ \hline \hline 
3.1 & $\begin{matrix} 1 & 0 & 0 & -1 \\ 0 & 1 & 0 & -1 \\ 0 & 0 & 1 & -3 \end{matrix}$ & $72$ & $1$ & 547377 \\ \hline
3.2 & $\begin{matrix} 1 & 0 & 0 & -1 & -1 \\ 0 & 1 & 0 & -1 & -1 \\ 0 & 0 & 1 & -3 & -2 \end{matrix}$ & $58$ & $2$ & 544337 \\ \hline
\end{tabular}
\end{center}
\end{table}

\bigskip

\begin{table}[H]
\caption{Toric Fano 3-folds $V$, $\text{Sing}(V)= \left\{ \frac{1}{k}(1,1,1) \right\}$}\label{Table4}
\renewcommand{\arraystretch}{2}
\begin{center}
\begin{tabular}{| c | | c | c | c |} \hline
Id & $\mathcal{V}(P_{V})$ & $(-K_{X})^{3}$ & $\rho (X)$ \\ \hline \hline 
k.1 & $\begin{matrix} 1 & 0 & 0 & -1 \\ 0 & 1 & 0 & -1 \\ 0 & 0 & 1 & -k \end{matrix}$ & $\frac{(k+3)^3}{k}$ & $1$ \\ \hline
k.2 & $\begin{matrix} 1 & 0 & 0 & -1 & -1 \\ 0 & 1 & 0 & -1 & -1 \\ 0 & 0 & 1 & -k & -k+1 \end{matrix}$ & $\frac{k^{3}+7k^{2}+35k+27}{k}$ & $2$ \\ \hline
\end{tabular}
\end{center}
\end{table}

\section*{Acknowledgements}

The author would like to thank Jonny Evans, his postdoctoral supervisor, for his guidance and insights throughout this project. This work was supported by Evans' EPSRC Grant EP/P02095X/2.

\end{document}